\numberwithin{equation}{section}
\newtheorem{theorem}{Theorem}[section]
\newtheorem{lemma}{Lemma}[section]
\newtheorem{remark}{Remark}[section]
\title{On Abel's Identity}
\author{Mehrzad Ajoodanian}
\begin{document}

\maketitle

\epigraph{\emph{\small "Study the masters, not the pupils."}}{\small --- Niels Henrik Abel (1802--1829)}

\begin{abstract}
We provide a natural duality that matches, in reverse order, the coefficients of the characteristic polynomial of the Maurer-Cartan of the Wronskian matrix with the coefficients of the original differential equation. Abel's identity is recovered as a corollary.  
\end{abstract}
\let\thefootnote\relax\footnote{I would like to thank Amir Jafari.}
\section{Introduction}
Abel's identity remains one of the most elegant and inspiring results in the theory of ordinary differential equations. Although Abel passed away at the young age of twenty-seven, his ideas have stood the test of time. This paper offers a natural duality that generalizes Abel's identity. 

Let $I$ be an interval in $\mathbb{R}$, and let $V$ be an $n$-dimensional real vector space. Suppose $A \colon I \to V$ is a smooth map. The \emph{Wronskian matrix} of $A$ relative to a chosen basis is defined by
\[
W(A) =
\begin{bmatrix}
 a_1 & a_2 & \cdots & a_n \\
 a_1' & a_2' & \cdots & a_n' \\
 \vdots & \vdots & \ddots & \vdots \\
 a_1^{(n-1)} & a_2^{(n-1)} & \cdots & a_n^{(n-1)}
\end{bmatrix},
\]
where $A = (a_1, \dots, a_n)$ in the given basis, and $a_i^{(j)}$ denotes the $j$-th derivative of $a_i$. When no confusion arises, we simply write $W = W(A)$.

\begin{lemma}
Let $T$ denote a change of basis in $V$. Then the Wronskian matrix transforms as
\[
W(A) \mapsto W(A)T.
\]
\end{lemma}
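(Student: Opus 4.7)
The plan is to unfold both sides by definition and exploit the one essential property of a change-of-basis matrix: its entries are constants, so it commutes with the differentiation operator. First I would fix notation: take $T \in \mathrm{GL}_n(\mathbb{R})$ to be the matrix of the change of basis, and interpret the coordinates of $A$ as a row $(a_1, \ldots, a_n)$ so that the new coordinates are $\tilde{A} = AT$, i.e.\ $\tilde{a}_j = \sum_i a_i T_{ij}$. Fixing this convention is important because the stated formula puts $T$ on the right; had we written $A$ as a column of coordinates, the basis change would appear as $T^{-1}$ on the left.

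Next, using $\mathbb{R}$-linearity of $\tfrac{d}{dt}$ and the fact that each $T_{ij}$ is independent of the parameter in $I$, I would differentiate termwise to obtain $\tilde{a}_j^{(k)} = \sum_i a_i^{(k)} T_{ij}$ for every $0 \le k \le n-1$. Substituting this back into the defining template of the Wronskian gives
\[
\bigl[W(\tilde A)\bigr]_{k+1,\,j} \;=\; \tilde{a}_j^{(k)} \;=\; \sum_{i} W(A)_{k+1,\,i}\, T_{ij} \;=\; \bigl[W(A)\,T\bigr]_{k+1,\,j},
\]
which is exactly the asserted transformation rule $W(A) \mapsto W(A)\,T$.

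There is no genuine obstacle here; the lemma is essentially a one-line linearity computation dressed up in matrix language. The only thing to be careful about is the row/column convention above, since getting it wrong would produce the transpose or inverse of $T$ on the wrong side. Once the convention is pinned down, the argument is purely mechanical, and this is the natural place to record it because the same commutation of $T$ with $\tfrac{d}{dt}$ will be reused throughout the paper when we study the Maurer--Cartan form of $W$.
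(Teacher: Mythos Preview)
Your argument is correct: the key point is exactly that $T$ has constant entries, so $\tfrac{d}{dt}$ passes through it termwise, and the rest is bookkeeping with the row convention. The paper, however, states this lemma without proof and simply invokes the formula $W(TA)=W(A)T$ later when proving the basis-invariance of $R$; so you are supplying the (only natural) details the paper chose to omit rather than offering an alternative route.
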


The \emph{Wronskian determinant} of $A$ is then defined by
\[
 w(A) = \det W(A).
\]
We assume throughout that $w(A) \neq 0$ on $I$. Since $V$ is $n$-dimensional, $A$ satisfies a linear homogeneous differential equation of order $n$:
\[
 A^{(n)} = p_1 A^{(n-1)} + p_2 A^{(n-2)} + \cdots + p_n A,
\]
where the coefficients $p_i$ are smooth functions on $I$. These coefficients are independent of the choice of basis in $V$. By Cramer's rule, we may express them as
\[
 p_i = \frac{w_i(A)}{w(A)},
\]
where $w_i(A)$ denotes the determinant obtained by replacing the $(n+1-i)$-th row of $W(A)$ with $(a_1^{(n)}, \dots, a_n^{(n)})$.

Abel’s identity asserts that the Wronskian determinant $w$ satisfies the first-order differential equation
\[
 p_1 = \frac{w'}{w}.
\]
Abel's identity follows from Cramer's rule together with the standard formula for the derivative of a determinant, expressed as the sum of determinants obtained by differentiating each row in turn.

A natural question then arises: what role do the remaining coefficients $p_i$ play? This paper provides an answer via a natural duality.

\section{The Maurer--Cartan Form}
In the context of Lie groups and gauge theory, the \emph{Maurer--Cartan forms} for a Lie group $G$ are defined by
\[
 R(g) = dg\,g^{-1}, \qquad L(g) = g^{-1}dg,
\]
are $1$-forms with values in the Lie algebra of $G$. Here we adopt the right Maurer--Cartan form, as it proves more convenient for our purposes. On an interval $U \subset \mathbb{R}$, we have the advantage of global coordinates, so we may define the \emph{Maurer--Cartan matrix} associated to $A$ by
\[
 R = W'W^{-1}.
\]

\begin{lemma}
The matrix $R$ is independent of the choice of basis in $V$.
\end{lemma}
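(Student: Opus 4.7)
The plan is to use the transformation law from the previous lemma together with the observation that a change of basis $T$ is a constant invertible matrix, independent of the parameter on $I$. Since $T$ does not depend on the variable of differentiation, $T' = 0$, which is the fact that makes the whole argument go through.

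First, I would apply the previous lemma to substitute $W \mapsto WT$ under the change of basis, and compute the transformed Maurer--Cartan matrix
\[
R \mapsto (WT)'(WT)^{-1}.
\]
Next, I would expand $(WT)' = W'T + WT' = W'T$ using the Leibniz rule together with $T' = 0$, and $(WT)^{-1} = T^{-1}W^{-1}$ using the standard inverse of a product. Substituting,
\[
(WT)'(WT)^{-1} = W'T\,T^{-1}W^{-1} = W'W^{-1} = R,
\]
which is exactly the claim.

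I do not expect any real obstacle here; the content of the statement is essentially the observation that $R = W'W^{-1}$ is right-invariant under multiplication by constant matrices, which is precisely the structural reason for working with the \emph{right} Maurer--Cartan form rather than the left one. The only point to articulate carefully is why $T$ is constant: it is a change of basis on the finite-dimensional vector space $V$, an object independent of $t \in I$, so its matrix in any pair of bases is a fixed element of $GL(V)$.
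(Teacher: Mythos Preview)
Your proof is correct and follows exactly the same computation as the paper's: apply the transformation $W \mapsto WT$ from the previous lemma and observe that $(WT)'(WT)^{-1} = W'T\,T^{-1}W^{-1} = R$. You merely spell out more explicitly that $T' = 0$ and that $(WT)^{-1} = T^{-1}W^{-1}$, which the paper leaves implicit.
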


\begin{proof}
Suppose $T$ is a change of basis for $V$. Then $W(TA) = W(A)T$, hence
\[
 R(TA) = (W(A)T)'(W(A)T)^{-1} = W'(A)TT^{-1}W(A)^{-1} = R(A).
\]
\end{proof}

Note that $R$ is invariant under a change of basis, whereas the left Maurer--Cartan form $L = W^{-1}W'$ transforms by conjugation under such changes.

Let $q_j$ denote the coefficients of the characteristic polynomial of $R$. By the Cayley--Hamilton theorem, $R$ satisfies its characteristic polynomial:
\[
 R^n = q_1 R^{n-1} + q_2 R^{n-2} + \cdots + q_n I,
\]
where each $q_j$ is a smooth function on $I$ and independent of the choice of basis.

\begin{remark}
We prefer $R$ to $L$ because $R$ is basis-invariant as a matrix. However, since $R$ and $L$ are conjugate, their characteristic polynomials coincide. Hence, the coefficients $q_i$ may equivalently be read from the characteristic polynomial of $L$.
\end{remark}

\section{Main Theorem}
We now present the natural duality behind Abel’s identity.

\begin{theorem}
The coefficients of the characteristic polynomial of Maurer-Cartan of the Wronskian matrix are, in reverse order, the same as the coefficients of the original differential equation. More precisely,
for all $0 < i < n$, the following equality holds:
\[
 q_i = p_j, \qquad \text{whenever } i + j = n + 1.
\]
\end{theorem}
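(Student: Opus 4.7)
The plan is to identify $R$ explicitly as the companion matrix of the scalar ODE that $A$ satisfies; once this identification is in hand, the theorem reduces to the classical computation of the characteristic polynomial of a companion matrix.

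First, I would verify row by row that $W' = C\,W$, where $C$ is the $n \times n$ companion matrix built from the $p_j$'s. The $i$-th row of $W$ is $A^{(i-1)}$, so its derivative is $A^{(i)}$; for $i < n$ this is already the $(i+1)$-th row of $W$, accounting for the superdiagonal of $1$'s in $C$, while for $i = n$ one substitutes the governing equation
\[
A^{(n)} = p_1 A^{(n-1)} + p_2 A^{(n-2)} + \cdots + p_n A
\]
to express $A^{(n)}$ as an explicit linear combination of the rows of $W$, thereby populating the bottom row of $C$ with the $p_j$'s. Since $W$ is invertible, this immediately yields $R = W'W^{-1} = C$.

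Next, I would read off the characteristic polynomial of $C$ using the standard companion-matrix formula, proved by a single cofactor expansion along the first column together with induction on $n$. The resulting polynomial has its nontrivial coefficients built from the $p_j$'s in a specific order. By Cayley--Hamilton, $R$ satisfies this polynomial, so comparing with the defining expansion $R^n = q_1 R^{n-1} + q_2 R^{n-2} + \cdots + q_n I$ produces the pairing between the $q_i$'s and the $p_j$'s asserted in the theorem, and it makes transparent how Abel's identity reappears through the trace of $R$ as a special extremal case.

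The main obstacle is not conceptual but purely index-bookkeeping. The positions of $p_1, \ldots, p_n$ along the bottom row of $C$ are dictated by the convention of placing low-order derivatives in the top rows of $W$, and one then has to track carefully how those positions reappear as coefficients of $\det(\lambda I - C)$. This double passage, from ODE to companion matrix and then from companion matrix to characteristic polynomial, is the one step where a subtle sign or off-by-one error could slip in, so I would carry out the index accounting with extra care to confirm the relation $i + j = n + 1$ directly.
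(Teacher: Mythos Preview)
Your proposal is correct and follows essentially the same approach as the paper: the paper's decomposition $R = a + b$ (superdiagonal shift plus rank-one bottom row) is precisely your companion matrix $C$, and both verify $W' = CW$ by the same row-by-row argument. You are in fact more explicit than the paper about the final step, proposing a cofactor expansion along the first column to compute $\det(\lambda I - C)$, whereas the paper simply asserts that the characteristic polynomial of $R$ yields the pairing $q_i = p_j$.
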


As an immediate corollary, we recover Abel’s identity.
\begin{theorem}[Abel]
\[
 p_1 = \frac{w'}{w}.
\]
\end{theorem}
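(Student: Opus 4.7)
The plan is to recover Abel's identity by identifying the top-degree coefficient $q_1$ of the characteristic polynomial of the Maurer--Cartan matrix $R$ in two complementary ways: once as the trace of $R$, and once, via the main theorem, as the coefficient $p_1$ of the differential equation.

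First, from the Cayley--Hamilton relation $R^n = q_1 R^{n-1} + q_2 R^{n-2} + \cdots + q_n I$ recorded earlier, the characteristic polynomial of $R$ reads $\lambda^n - q_1 \lambda^{n-1} - \cdots - q_n$. Its subleading coefficient is $-q_1$ on the one hand and $-\operatorname{tr}(R)$ on the other, so $q_1 = \operatorname{tr}(R)$. Independently, Jacobi's formula for the derivative of a determinant gives
\[
w' \;=\; (\det W)' \;=\; w \operatorname{tr}(W^{-1}W') \;=\; w \operatorname{tr}(L).
\]
As noted in the Remark, $R = W L W^{-1}$ is conjugate to $L$, so their traces coincide and $\operatorname{tr}(R) = w'/w$. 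Combining the two identifications yields $q_1 = w'/w$.

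The main theorem then pairs $q_1$ with $p_1$, and substitution produces
\[
p_1 \;=\; q_1 \;=\; \frac{w'}{w},
\]
which is Abel's identity. The only inputs beyond the main theorem are Jacobi's formula for $(\det W)'$ and the basis--invariance of the trace; both are standard, so no serious obstacle is anticipated. The whole delicate content of the corollary has already been absorbed into the main theorem, and what remains is simply to read off the top coefficient of the characteristic polynomial as a trace.
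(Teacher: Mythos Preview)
Your route is genuinely different from the paper's. The paper deduces Abel's identity from the \emph{bottom} coefficient: it invokes the duality in the form $p_1=q_n$, identifies $q_n$ with $\det R$, and then passes through $\det(W')/\det(W)=(\det W)'/\det W$. You instead work at the \emph{top} coefficient, identifying $q_1=\operatorname{tr}R$ and computing this trace via Jacobi's formula. The trace/Jacobi path is clean and entirely standard, and it sidesteps the special identity $\det(W')=(\det W)'$ on which the paper's chain relies.

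There is, however, a genuine slip in how you invoke the main theorem. As stated, the duality is $q_i=p_j$ for $i+j=n+1$; setting $j=1$ gives $p_1=q_n$, not $p_1=q_1$, so the sentence ``the main theorem then pairs $q_1$ with $p_1$'' does not follow from the theorem as written (and indeed the paper's own corollary uses $p_1=q_n$). Your conclusion is nonetheless correct, but for a different reason: from the explicit decomposition $R=a+b$ established in the proof of the main theorem, the only nonzero diagonal entry of $R$ is $R_{nn}=p_1$, hence $\operatorname{tr}R=p_1$ directly. If you replace the appeal to the index-pairing by this one-line observation, your argument becomes a self-contained and arguably more transparent proof of Abel's identity: $p_1=\operatorname{tr}R=\operatorname{tr}(W'W^{-1})=w'/w$.
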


\begin{proof}\[
 p_1 = q_n = \det(R) = \det(W'W^{-1}) = \frac{\det(W')}{\det(W)} = \frac{(\det W)'}{\det(W)} = \frac{w'}{w}.
\]
\end{proof}

\section{Proof of the Main Theorem}
We prefer to work with $R$ since it is independent of the choice of basis. We claim that $R$ can be decomposed uniquely as a sum $R = a+ b$, where:
\begin{itemize}
\item $a$ is a constant matrix whose only nonzero entries occur for $j - i = 1$ and are equal to one.
  \item $b$ is a rank-one matrix whose entries vanish except in the last row ($i = n$), where $E_{nj} = p_i$ whenever $i + j = n + 1$;  
\end{itemize}
Instead, we show that
\[
 W' = (a + b)W.
\]
The product $bW$ reproduces the last row of $W'$, while $aW$ shifts each row of $W$ upward by one, precisely matching the structure of $W'$. This establishes the claim about $R$. The coefficients of the characteristic polynomial of $R$ then yield $q_i = p_j$ whenever $i + j = n + 1$.
\qed


\bigskip
Mehrzad Ajoodanian\\
\noindent\textit{Email:} {mehrzad77@gmail.com}


\begin{thebibliography}{1}
\bibitem{Abel1829}  N.H. Abel, "Précis d'une théorie des fonctions elliptiques" \emph{J. Reine Angew. Math., 4} pp. 309–348.


\end{thebibliography}
\end{document}